\newtheorem{theo}{Theorem}[section]
\newtheorem{lemm}[theo]{Lemma}
\newtheorem{conj}[theo]{Conjecture}
\theoremstyle{remark}
\newtheorem{rema}[theo]{\bf Remark}
\theoremstyle{remark}
\newtheorem{exam}[theo]{\bf Example}
\theoremstyle{remark}
\theoremstyle{remark}
\theoremstyle{remark}
\begin{document}

\title{Groups of prime degree and the Bateman--Horn Conjecture}

\author{Gareth A. Jones and Alexander K. Zvonkin}

\address{School of Mathematical Sciences, University of Southampton, Southampton SO17 1BJ, UK}
\email{G.A.Jones@maths.soton.ac.uk}

\address{LaBRI, Universit\'e de Bordeaux, 351 Cours de la Lib\'eration, F-33405 Talence 
Cedex, France}
\email{zvonkin@labri.fr}

\subjclass[2010]{11A41, 11N05, 11N32, 20B05, 20B25, 20H20}.

\keywords{Permutation group, linear group, prime degree, projective space, 
Bunyakovsky conjecture, Bateman--Horn conjecture, Goormaghtigh conjecture.}

\begin{abstract}
As a consequence of the classification of finite simple groups, the classification of 
permutation groups of prime degree is complete, apart from the question of when the 
natural degree $(q^n-1)/(q-1)$ of ${\rm PSL}_n(q)$ is prime. We present heuristic arguments 
and computational evidence based on the Bateman--Horn Conjecture
to support a conjecture that for each prime $n\ge 3$ there are 
infinitely many primes of this form, even if one restricts to prime values of $q$.
Similar arguments and results apply to the parameters of the simple groups
${\rm PSL}_n(q)$, ${\rm PSU}_n(q)$ and ${\rm PSp}_{2n}(q)$ which arise in the work of Dixon and Zalesskii on linear groups of prime degree.
\end{abstract}

\maketitle

%%%%%%%%%%%%%%%%%%

\section{Permutation groups of prime degree}\label{sec:prime}

One of the oldest problems in Group Theory is to classify the permutation groups of prime degree, originally studied in terms of the solution of polynomial equations of prime degree. Let $G$ be a transitive permutation group of prime degree $p$. In 1831 Galois~\cite{Galois} proved that $G$ is solvable if and only if $G$ is (isomorphic to) a subgroup of the $1$-dimensional affine group
\[{\rm AGL}_1(p)=\{t\mapsto at+b\mid a, b\in{\mathbb F}_p, a\ne 0\}\cong {\rm C}_p\rtimes {\rm C}_{p-1}\]
containing the translation subgroup $\{t\mapsto t+b\}\cong {\rm C}_p$. There is one such group $G$ for each $d$ dividing $p-1$, namely
\[\{t\mapsto at+b\mid a, b\in{\mathbb F}_p, a^d=1\}\cong {\rm C}_p\rtimes {\rm C}_d.\]

In 1906 Burnside (\cite{Bur06}, \cite[\S 251]{Bur11}) proved that if $G$ is nonsolvable then $G$ is $2$-transitive.
In this case $G$ has a unique minimal normal subgroup $S\ne 1$ which is simple and also 
$2$-transitive, with centraliser $C_G(S)=1$, so that $G\le{\rm Aut}\,S$. This reduces the problem to studying nonabelian simple groups $S$ of degree $p$ and their automorphism groups. The classification of finite simple groups (announced around 1980) implies a classification of those with 2-transitive actions (see~\cite{Cam81} or~\cite{Feit}, for example). Most of these have composite degree; those of prime degree are as follows:

\begin{itemize}
\item[a)] $S={\rm A}_p$, $G={\rm S}_p$, for primes $p\ge 5$;
\item[b)] $S={\rm PSL}_n(q)\le G\le {\rm P\Gamma L}_n(q)={\rm PGL}_n(q)\rtimes{\rm Gal}\,{\mathbb F}_q$ in cases where the natural degree $m:=(q^n-1)/(q-1)$ of these groups is prime;
\item[c)] $S={\rm PSL}_2(11)$, ${\rm M}_{11}$ and ${\rm M}_{23}$ for $p=11, 11$ and $23$.
\end{itemize}
 In (b) the groups act on the $m$ points (or $m$ points and hyperplanes if $n\ge 3$) of the projective geometry ${\mathbb P}^{n-1}({\mathbb F}_q)$ for a prime power $q$. In (c), ${\rm PSL}_2(11)$ acts on the $11$ cosets of a subgroup $H\cong {\rm A}_5$ (two conjugacy classes, giving two actions, equivalent to those on the vertices and cells of the hendecachoron or 11-cell, a nonorientable 4-polytope discovered independently by Gr\"unbaum~\cite{Gru} and Coxeter~\cite{Cox}; see also \cite{11-cell}); ${\rm M}_{11}$ and ${\rm M}_{23}$ are Mathieu groups, acting on block designs with $11$ and $23$ points.

Unfortunately, this result does not tell us when the degree $m$ in (b) is prime. Indeed, it is unknown whether there are finitely or infinitely many such `projective primes', as we will call them.

\medskip

\noindent{\bf Open Problem:} In (b), is the degree
\[m=\frac{q^n-1}{q-1}=1+q+q^2+\cdots+q^{n-1}\quad (q\;\;\hbox{a prime power})\]
prime in finitely or infinitely many cases?

\medskip

If $n=2$ the projective primes $m$ are the Fermat primes $1+2^e$, $e=2^f$;
the only known examples are  $3, 5, 17, 257, 65537$ for $f\le 4$.
If $q=2$ the primes $m$ are the Mersenne primes $2^n-1$, $n$ prime;
at the time of writing, $51$ examples $3, 7, 31, \ldots, 2^{82\,589\,933}-1$ are known.
It is widely conjectured that there are no further Fermat primes, but infinitely many Mersenne primes.
These are very old and difficult problems; with nothing new to say about them,
we will assume from now on that $n, q\ge 3$.

Our main conjecture is that {\em there are infinitely many projective primes}.
The goal of this note is to present heuristic arguments and computational evidence to support this conjecture.
See~\cite{JZ:primes} for further details, \cite{JZ:Klein} for applications to dessins d'enfants (maps on surfaces representing curves defined over algebraic number fields), and~\cite{JZ:block} for a similar problem involving block designs.

%%%%%%%%%%%%%%%%%

\section{Conjectures}\label{sec:conj}

If a polynomial $f(t)\in{\mathbb Z}[t]$ takes infinitely many prime values for $t\in{\mathbb N}$ then clearly
\begin{itemize}
\item its leading coefficient is positive,
\item it is irreducible in ${\mathbb Z}[t]$, and
\item it is not identically zero modulo any prime.
\end{itemize}
In 1857 Bunyakovsky, the discoverer of the infinite-dimensional form of the Cauchy--Schwarz inequality, conjectured in~\cite{Bun-1857} that these conditions are also sufficient. (The last condition is needed to exclude cases like $t^2+t+2$, which is irreducible but takes only even values.) The case $\deg f=1$ is true: this is Dirichlet's Theorem on primes in an arithmetic progression (see~\cite[\S5.3]{BS}). No other case is proved, not even $t^2+1$, studied by Euler~\cite{Eul} and Landau.
Writing $q=p^e$ we require the result for $f(t)=1+t^e+t^{2e}+\cdots+t^{(n-1)e}$, but with the extra condition that $t$ should also be prime.

Schinzel's Hypothesis H~\cite{SS} deals with this restriction by extending Bunyakovsky's 
conjecture to finite {\em sets\/} of polynomials $f_1,\ldots,f_k$ simultaneously taking 
prime values infinitely often.
An obvious necessary condition is that each
$f_i$ should satisfy the first two Bunyakovsky conditions, while the third is that
$f(t):=\prod_{i=1}^k f_i(t)$ should not be identically zero modulo any prime.
For example, $t(t+1)$ is identically zero mod~$(2)$, while $t(t+2)$ is not.
It is conjectured that these conditions are also sufficient,
but as in the case of the Bunyakovsky Conjecture this has been proved only in the case 
$k=1$, $\deg f_1=1$.
(However, see~\cite{SkSo} for recent evidence in support of Hypothesis H.)

In addition to cases with $k=1$, such as the Euler--Landau problem,
conjectures which would follow from a proof of Hypothesis H include
\begin{itemize}
\item $f_1=t$, $f_2=t+2$, the twin primes conjecture;
\item $f_1=t$, $f_2=2t+1$, the Sophie Germain primes conjecture;
\item $f_1=t$, $f_2=1+t^{e}+t^{2e}+\cdots+t^{(n-1)e}$ for fixed $e$ and $n$, particular cases of our projective primes conjecture, provided $f_2$ is irreducible (see Section~\ref{sec:irred}).
\end{itemize}

\begin{rema}[Hypothesis ${\rm H}_0$]\label{rem:H_0}
In the same paper \cite{SS}, the authors formulate an apparently weaker conjecture 
${\rm H}_0$: 
under the same conditions as above, the values $f_1(t),\ldots,f_k(t)$ are all prime for 
{\em at least one positive integer}\/ $t$. It turns out, however, that ${\rm H}_0$ implies 
$\rm H$, and this fact is trivial! It suffices to consider the sets of polynomials
$f_1(t+c),\ldots,f_k(t+c)$ for constants $c\in\mathbb{N}$, and to note that, 
according to ${\rm H}_0$, the values of the polynomials in each of the sets are all prime 
for at least one integer $t>0$.
\end{rema}

In 1962 Bateman and Horn~\cite{BH} proposed a quantified version of Schinzel's Hypothesis H which, if proved, would imply all the above conjectures (see~\cite{AFG} for an excellent survey). 

\begin{conj}[The Bateman--Horn Conjecture (BHC)]
If distinct polynomials $f_1,\ldots,f_k$ satisfy the above conditions, and $Q(x)$ is the 
number of positive integers $t\le x$ such that $f_1(t),\ldots, f_k(t)$ are all prime, then
\begin{equation}\label{eq:BH-Q}
Q(x)\sim E(x):=\frac{C}{\prod_{i=1}^k\deg f_i}\int_2^x\frac{dt}{(\ln t)^k}
\quad\hbox{as}\quad x\to\infty,
\end{equation}
where
\begin{equation}\label{eq:BH-C}
C=C(f_1,\ldots, f_k):=\prod_{{\rm prime}\,r}\left(1-\frac{1}{r}\right)^{-k}\left(1-\frac{\omega_f(r)}{r}\right)
\end{equation}
with the product over all primes $r$, and $\omega_f(r)$ is the number of solutions in 
${\mathbb F}_r$ of $f(t)=0$. 
\end{conj}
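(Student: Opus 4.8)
The plan is to recover the asymptotic $Q(x)\sim E(x)$ from a probabilistic model for the simultaneous primality of $f_1,\ldots,f_k$, and then to convert that model into a rigorous count by sieve methods. First I would formalise the heuristic behind $E(x)$: by the prime number theorem a random integer of size $f_i(t)\asymp t^{\deg f_i}$ is prime with ``probability'' $1/\ln f_i(t)\sim 1/(\deg f_i\cdot\ln t)$, so if the events ``$f_i(t)$ prime'' were independent the expected count up to $x$ would be $\frac{1}{\prod_i\deg f_i}\int_2^x(\ln t)^{-k}\,dt$. The values are not uniformly distributed modulo small primes, and $C$ is precisely the correction: for each prime $r$ the chance that no $f_i(t)$ is divisible by $r$ is $1-\omega_f(r)/r$ as $t$ ranges over $\mathbb{F}_r$, to be compared with $(1-1/r)^k$ for $k$ genuinely independent integers, and the product of the ratios over all $r$ gives $C$.

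Second, I would verify that this singular series converges to a positive limit. Since $\deg f=\sum_i\deg f_i$ is fixed and each $f_i$ is irreducible, the prime ideal theorem (equivalently, the Chebotarev density theorem applied to the splitting fields of the $f_i$) shows that $\omega_f(r)=k$ on average over $r$, so each Euler factor is $1+O(r^{-2})$ and the product converges; the third Bunyakovsky condition guarantees that no factor vanishes. This step is routine given the hypotheses.

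The genuine analytic work is the passage from the model to an actual asymptotic. The upper bound of the correct order of magnitude is accessible: applying Selberg's sieve (or the large sieve) to the integers $t\le x$ with all $f_i(t)$ free of prime factors below $x^{1/2}$ yields $Q(x)\le (C'+o(1))E(x)$ for an explicit $C'$ depending only on $k$. The hard part, and the reason no case beyond $k=1,\ \deg f_1=1$ is known, lies entirely in the matching lower bound. Here one meets the \emph{parity problem} of sieve theory: a sieve weighted only by divisibility conditions cannot distinguish integers with an even number of prime factors from those with an odd number, and so cannot by itself isolate primes. This is exactly why $t^2+1$, the twin primes, and the Sophie Germain primes all remain open, and it is the main obstacle I expect.

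The crux, therefore, is to supply an arithmetic input that breaks parity. The known successes---the Dirichlet/PNT case and the Friedlander--Iwaniec theorem on primes of the form $a^2+b^4$---either reduce to linear progressions or exploit a second free variable to manufacture bilinear (Type~II) sums that a one-dimensional sieve lacks. For a single univariate polynomial of degree $\ge 2$, or for $k\ge 2$ polynomials simultaneously, no such bilinear structure is presently available, and even the Maynard--Tao technology delivers only bounded clusters of primes rather than the exact density predicted by $E(x)$. I would accordingly treat a complete proof as contingent on a genuinely new estimate for the relevant exponential or bilinear sums; absent that, the realistic yield of this plan is the rigorous upper bound together with the derivation of $C$ and $E(x)$, which is the evidence the paper then tests numerically.
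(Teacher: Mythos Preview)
The statement is a \emph{conjecture}, and the paper does not prove it; it explicitly says that the BHC ``is proved only in the case of Dirichlet's Theorem'' and refers the reader to \cite{BH} and \cite{AFG} for the heuristic derivation. There is therefore no proof in the paper to compare your proposal against.

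Your proposal is consistent with this: you reconstruct the standard heuristic for $E(x)$ and $C$ (which is exactly what \cite{BH} and \cite{AFG} do), you note the routine convergence of the singular series, and you correctly identify that the lower bound is blocked by the parity problem in sieve theory, so that no proof is currently available beyond $k=1$, $\deg f_1=1$. That assessment is accurate and in fact more informative than the paper's own treatment, which simply states the conjecture and uses it numerically. The only thing to flag is that your text is framed as a ``proof proposal'' when its honest conclusion is that the conjecture is open; you should present it as a heuristic derivation plus a discussion of obstructions, not as a plan that could be completed with routine effort.
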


The infinite product converges to a limit $C>0$ (see~\cite{AFG} for a proof), and $\int_2^{\infty}dt/(\ln t)^k$ diverges for each $k\ge 1$, so $E(x)\to\infty$ with $x$; thus $f_1(t),\ldots,f_k(t)$ are simultaneously prime for infinitely many~$t$ provided the conjecture is true. However, it is proved only in the case of Dirichlet's Theorem. Since
\[\int_2^x\frac{dt}{(\ln t)^k}=\frac{x}{(\ln x)^k}+O\left(\frac{x}{(\ln x)^{k+1}}\right),\]
there is an alternative form
\begin{equation}\label{eq:BH-Q'}
Q(x)\sim H(x):=\frac{C}{\prod_{i=1}^k\deg f_i}\cdot\frac{x}{(\ln x)^k}
\quad\hbox{as}\quad x\to\infty
\end{equation}
for the estimate, which can be more convenient but significantly less accurate. 

\begin{exam}
Taking $k=1$ and $f_1=f=t$ we get $\omega_f(r)=1$ for all prime $r$, so that $C=1$.
Therefore, we obtain the two familiar versions of the Prime Number Theorem:
\[\pi(x)\sim {\rm Li}(x):=\int_2^x\frac{dt}{\ln t}\sim\frac{x}{\ln x}.\]
The function ${\rm Li}(x)$ is also called the {\em offset logarithmic integral function}.
The estimate $x/\!\ln x$ is that of Hadamard and de la Vall\'ee Poussin, while the estimate
${\rm Li}(x)$ is a particular case of the BHC. To compare these two estimates, let us take
$\pi(10^{25})=176\,846\,309\,399\,143\,769\,411\,680$ (see the entry A006880 of \cite{OEIS}).
Then the relative error of the estimate $10^{25}\!/\!\ln(10^{25})$ is $-1.77\,\%$, 
while that of the estimate ${\rm Li}(10^{25})$ is $3.12\cdot 10^{-11}\,\%$.
\end{exam}

For a heuristic proof of the BHC see the original paper \cite{BH} by Bateman and Horn
and also a recent overview \cite{AFG}.

\begin{rema}[An improved estimate]\label{rem:Li}
Li~\cite{Li} has recently proposed a modification of the BHC, in which 
$1/\!\ln f_i(t)$ is used instead of $1/d_i\ln t$. This gives significantly better 
estimates $E(x)$ in cases such as the Sophie Germain primes conjecture involving 
a non-monic polynomial $f_i$, but when each $f_i$ is monic, as in our case, 
the effect is negligible. 
\end{rema}

%%%%%%%%%%%%%%%%%%%%%%

\section{Irreducibility of the polynomial $(t^{ne}-1)/(t^e-1)$}\label{sec:irred}

In order to apply the BHC to the projective groups of prime degree we consider
two polynomials, $f_1=t$ and $f_2=(t^{ne}-1)/(t^e-1)$, and we need to ensure that 
the polynomial $f_2$ is irreducible.

\begin{lemm}\label{le:irred}
Given integers $n\ge 2$ and $e\ge 1$, the polynomial
\[f_2(t)=\frac{t^{ne}-1}{t^e-1}=1+t^e+t^{2e}+\cdots+t^{(n-1)e}\]
is irreducible in ${\mathbb Z}[t]$ if and only if $n$ is prime and $e$ is a power $n^i\;(i\ge 0)$ of $n$.
\end{lemm}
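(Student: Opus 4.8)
The plan is to analyze the factorization of $f_2(t) = (t^{ne}-1)/(t^e-1)$ via cyclotomic polynomials. Over $\mathbb{Q}$ (equivalently, since $f_2$ is monic, over $\mathbb{Z}$, by Gauss's lemma), we have $t^{m}-1 = \prod_{d\mid m}\Phi_d(t)$, where $\Phi_d$ is the $d$-th cyclotomic polynomial, each irreducible. Hence $f_2(t) = \prod_{d}\Phi_d(t)$ where the product is over those $d$ dividing $ne$ but not dividing $e$. So the first step is to record that $f_2$ is irreducible if and only if there is exactly one such $d$, i.e.\ the set $S := \{d : d\mid ne,\ d\nmid e\}$ is a singleton.

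\smallskip

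Next I would establish the ``if'' direction. Suppose $n$ is prime and $e = n^i$. Then $ne = n^{i+1}$, its divisors are $1, n, n^2,\dots, n^{i+1}$, and those not dividing $e = n^i$ form exactly the singleton $\{n^{i+1}\}$. Thus $f_2 = \Phi_{n^{i+1}}$ is irreducible. (As a sanity check, $\Phi_{n^{i+1}}(t) = 1 + t^{n^i} + t^{2n^i}+\cdots+t^{(n-1)n^i}$, which indeed equals $f_2$ when $e = n^i$.)

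\smallskip

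For the ``only if'' direction I would argue contrapositively, showing $|S|\ge 2$ whenever the hypothesis fails. Note $ne \in S$ always (it divides $ne$; it cannot divide $e$ since $n\ge 2$). It remains to produce a second element $d \in S$ with $d \ne ne$. Write the prime factorization of $n$ and of $e$. First, if $n$ is not prime, pick a prime $\ell \mid n$; one can then exhibit a divisor $d$ of $ne$ that is a proper multiple of the ``$\ell$-free part'' issue — more concretely, consider $d = ne/\ell$: it divides $ne$, and one checks $d \nmid e$ because the $\ell$-adic (or, if $\ell\mid e$, some other prime's) valuation of $d$ still exceeds that of $e$; since $n$ is composite $d\ne ne$ can be arranged, giving $|S|\ge 2$. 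Second, if $n = \ell$ is prime but $e$ is not a power of $\ell$, then $e$ has a prime factor $p \ne \ell$; take $d = ne/p = \ell e/p$, which divides $ne$, is strictly less than $ne$, and satisfies $d\nmid e$ because $v_\ell(d) = v_\ell(e)+1 > v_\ell(e)$. In either case $S$ contains at least two distinct divisors, so $f_2$ is reducible. The cleanest packaging is probably: $|S| = d(ne) - d(e)$ where $d(\cdot)$ counts divisors, and $|S| = 1$ forces $d(ne) = d(e)+1$, which by multiplicativity of $d(\cdot)$ and elementary estimates pins down $ne = \ell^{i+1}$, $e = \ell^i$ with $\ell = n$ prime.

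\smallskip

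The main obstacle is the bookkeeping in the ``only if'' direction: one must handle the interaction between the prime factorizations of $n$ and $e$ (in particular the case where $\gcd(n,e) > 1$) carefully, so that the chosen second divisor $d$ genuinely lies in $S$ and is genuinely distinct from $ne$. Phrasing everything in terms of the divisor-counting function $d(\cdot)$ and its multiplicativity, rather than chasing individual valuations, should make this essentially mechanical and avoid case-splitting.
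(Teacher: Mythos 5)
Your proposal is correct and follows essentially the same route as the paper: factor $f_2$ as the product of the cyclotomic polynomials $\Phi_d$ over divisors $d$ of $ne$ not dividing $e$, and show this set of divisors is a singleton exactly when $n$ is prime and $e$ is a power of $n$ (the paper leaves this last divisor-counting step to the reader, which you spell out). One small simplification for your composite-$n$ case: with $\ell$ a prime divisor of $n$, the divisor $d=ne/\ell=(n/\ell)e\ge 2e>e$ automatically fails to divide $e$, so no valuation bookkeeping is needed there.
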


\noindent{\sl Proof}.
If $k\in\mathbb{N}$ the {\em cyclotomic polynomial}\/ $\Phi_k(x)$ is, by definition,
the polynomial with integer coefficients whose roots are the primitive $k$th roots of unity.
It is irreducible and has degree $\varphi(k)$, where $\varphi$ is the Euler totient function.
For any $n\in\mathbb{N}$ we have $x^n-1=\prod_{d|n}\Phi_d(x)$
(see~\cite[\S 5.2.1]{BS} or \cite[\S 4.3, Problem~26]{NZM}). 
Putting $x=t^e$ gives
\begin{equation}\label{eq:cyclotomic}
f_2(t) = \frac{t^{ne}-1}{t^e-1} = \prod_d \Phi_d(t),
\end{equation}
with the product over all $d$ which divide $ne$ but not $e$. 
Thus $f_2$ is irreducible if and only if there is just one such divisor $d$
(which is $ne$ itself, of course). By considering the prime power decompositions of $e$ and $ne$
one can see that this happens if and only if $n$ is prime and $e$ is a power of $n$. \hfill$\Box$

%%%%%%%%%%%%%%%%%%%%%%

\section{Primality testing}

To find $Q(x)$ for various large $x$, we used the Rabin--Miller (RM) primality test~\cite{Rabin}.
It determines whether a given number is prime or composite without trying to 
factor it but by checking independent instances of a necessary primality condition. There is a real abyss 
between the complexities of the most efficient factoring algorithms and the RM-test. 
To give but one example, it took 4400 GHz-years to factor a 232-digit number into 
two 116-digit primes, see~\cite{RSA}. The RM-test gives a correct answer 
(``the number is composite'') in less than 0.0005 seconds on a very modest laptop.

The RM-test is probabilistic. If it affirms that a given number is composite, then
it is indeed composite. If, however, the test affirms that a number is prime, 
the number may turn out to be composite. The probability 
of such an event is infinitesimally small: during 40 years of widespread use 
of the RM-test not a single such error has ever been reported\footnote{A dialogue from Gilbert 
and Sullivan's {\em I am the Captain of the Pinafore}\/ comes to mind: ``What, never? 
No, never. What, never? Well, hardly ever''.}. 
Note also that long computations are prone to hardware errors.
If, however, by incredibly bad luck a few of our `primes' are composite, this would not 
invalidate our evidence of literally millions of projective primes.

%%%%%%%%%%%%%%%%%%%%%%

\section{Applying the Bateman--Horn Conjecture to projective groups}
\label{sec:applying}

\subsection{Relative abundance of types of projective primes}

We tested the BHC estimates for projective primes against the results of computer searches. Define a projective prime $m=1+q+\cdots+q^{n-1}$ with $q=p^e$, $p$ prime, to have {\em type} $(e,n)$. For each type satisfying Lemma~\ref{le:irred} define $P(x)=P_{(e,n)}(x)$ to be the number of primes $p\le x$ such that $p$ and $m$ are prime, and let $E(x)=E_{(e,n)}(x)$ be the corresponding Bateman--Horn estimate~(\ref{eq:BH-Q}) for $P_{(e,n)}(x)$, formed using the polynomials
\[f_1(t)=t\quad\hbox{and}\quad f_2(t)=1+t^e+t^{2e}+\cdots+t^{(n-1)e}.\]

The smallest projective primes $m$, as a function of $p$, are those of type $(1,3)$, of the form $m=1+p+p^2$ with $p$ prime (recall that we have excluded the case $n=2$), so 
this type appears most frequently in searches up to a given bound. For example, all but $301$ of the $1\,974\,311$ projective primes $m\le 10^{18}$ have type $(1,3)$. The second most frequent type is $(1,5)$, with $252$ examples $m\le 10^{18}$.

\begin{table}[htbp]
\begin{center}
\begin{tabular}{l|c|c|c|c}
\hspace*{11mm}Segment & \#(prime $p$) &
\#(prime $m$) & ratio & $\max p$ \\
\hline
\hspace*{9mm} $2,\ldots,\, 10^{10}$ 			  & 455\,052\,511    &
15\,801\,827 & 3.473\% & 9\,999\,999\,491  \\
\hspace*{4mm} $10^{10},\ldots,\, 2\cdot 10^{10}$  & 427\,154\,205    & 
13\,882\,936 & 3.250\% & 19\,999\,999\,757 \\
$2\cdot 10^{10},\ldots,\, 3\cdot 10^{10}$ 		  & 417\,799\,210    & 
13\,279\,095 & 3.178\% & 29\,999\,999\,921 \\
$3\cdot 10^{10},\ldots,\, 4\cdot 10^{10}$ 		  & 411\,949\,507    & 
12\,913\,713 & 3.135\% & 39\,999\,999\,719 \\
$4\cdot 10^{10},\ldots,\, 5\cdot 10^{10}$ 		  & 407\,699\,145    & 
12\,645\,233 & 3.102\% & 49\,999\,999\,619 \\
$5\cdot 10^{10},\ldots,\, 6\cdot 10^{10}$ 		  & 404\,383\,577    & 
12\,439\,618 & 3.076\% & 59\,999\,999\,429 \\
$6\cdot 10^{10},\ldots,\, 7\cdot 10^{10}$		  &	401\,661\,384    &
12\,274\,191 & 3.056\% & 69\,999\,999\,287 \\
$7\cdot 10^{10},\ldots,\, 8\cdot 10^{10}$		  &	399\,359\,707 	 &
12\,136\,112 & 3.039\% & 79\,999\,999\,679 \\
$8\cdot 10^{10},\ldots,\, 9\cdot 10^{10}$		  & 397\,369\,745	 &
12\,010\,780 & 3.023\% & 89\,999\,999\,981 \\
$9\cdot 10^{10},\ldots,\, 10^{11}$  			  & 395\,625\,822	 &
11\,910\,803 & 3.011\% & 99\,999\,999\,977 \\
\hline
\hspace*{13mm}Total 	  						  & 4\,118\,054\,813 &
129\,294\,308 & 3.140\% & 99\,999\,999\,977
\end{tabular}
\end{center}
\vspace{2mm}
\caption{The second column gives the number of primes in the corresponding
segment, while the third column gives the number of those primes $p$ which
yield a projective prime $m=1+p+p^2$. The proportion of such primes among all the
primes of the second column is given in the fourth column.}
\label{tab:betrema}
\end{table}

As further evidence for the abundance of projective primes of type $(1,3)$, our colleague 
Jean B\'etr\'ema examined all primes $p\le 10^{11}$ using the package {\tt Primes.jl} 
of the language {\tt Julia}.
This is much more efficient than Maple for problems of this sort. We partially reproduce 
B\'etr\'ema's results in Table \ref{tab:betrema}. 

Thus 129\,294\,308 primes $p\le 10^{11}$ give a  prime $m$ of type $(1,3)$;
the largest is 99\,999\,999\,977, with $m=9\,999\,999\,995\,500\,000\,000\,507$.
The ratio decreases as the upper limit grows, but it seems reasonable
to conjecture that even in this restricted case there are infinitely many projective primes.

In making our estimates, we concentrated on the apparently most abundant case of type $(1,3)$, though we did not neglect other apparently less frequent types, such as $(1,5)$ and $(3,3)$. For types $(1,n)$ with $n$ prime the polynomials $f_1=t$ and $f_2=1+t+t^2+\cdots+t^{n-1}$ satisfy the conditions of the BHC. The roots of $f=f_1f_2$ in ${\mathbb F}_r$ are $0$ for all primes $r$, together with $1$ if $r=n$, and the $n-1$ primitive $n$-th roots of $1$ if $r\equiv 1$ mod~$(n)$, so $\omega_f(r)=2$, $n$ or $1$ as $r=n$, $r\equiv 1$ mod~$(n)$ or otherwise.

%%%%%%%%%%%%

\subsection{Type $(1,3)$.} Using these values for $n=3$, we computed 
$C=C(f_1,f_2)=1.521730$ 
by taking partial products in~(\ref{eq:BH-C}) over the primes $r\le 10^9$. To count primes $m=1+p+p^2\le 10^{18}$ we took $p\le x=10^9$ (solving $1+x+x^2=10^{18}$ would be more precise, but the difference is negligible). Using numerical integration, Maple gives
\[\int_2^x\negthinspace\frac{dt}{(\ln t)^2} = 2\,594\,294.364,\]
leading to an estimate
\[E(x)=E_{(1,3)}(x)=\frac{C}{2}\int_2^x\negthinspace\frac{dt}{(\ln t)^2} =
1\,973\,907.86.\]
Comparing this with the true value $P(x)=P_{(1,3)}(x)=1\,974\,010$, found by computer search, shows that the error in $E(x)$ is about $-0.0052\,\%$.

As a second experiment with type $(1,3)$ we took $x=i\cdot 10^{10}$ for $i=1, 2, \ldots, 10$.
Table~\ref{tab:n=3BHratios} gives the resulting values of $P(x)$, $E(x)$ and $E(x)/P(x)$. The maximum relative error, attained in the first line, is $0.034\,\%$.

\begin{table}[htbp]
\begin{center}
\begin{tabular}{c|c|c|c}
$x$ & $P(x)$ & $E(x)$ & $E(x)/P(x)$ \\
\hline
$1 \cdot 10^{10}$ &  15\,801\,827 & $1.579642126 \times 10^7$  & 0.9996579044 \\
$2 \cdot 10^{10}$ &  29\,684\,763 & $2.968054227 \times 10^7$  & 0.9998578150 \\
$3 \cdot 10^{10}$ &  42\,963\,858 & $4.296235691 \times 10^7$  & 0.9999650617 \\
$4 \cdot 10^{10}$ &  55\,877\,571 & $5.587447496 \times 10^7$  & 0.9999445924 \\
$5 \cdot 10^{10}$ &  68\,522\,804 & $6.852175590 \times 10^7$  & 0.9999847043 \\
$6 \cdot 10^{10}$ &  80\,962\,422 & $8.096382889 \times 10^7$  & 1.0000173771 \\
$7 \cdot 10^{10}$ &  93\,236\,613 & $9.323905289 \times 10^7$  & 1.0000261688 \\
$8 \cdot 10^{10}$ & 105\,372\,725 & $1.053741048 \times 10^8$  & 1.0000130940 \\
$9 \cdot 10^{10}$ & 117\,383\,505 & $1.173885689 \times 10^8$  & 1.0000431394 \\
$10^{11}$         & 129\,294\,308 & $1.292974079 \times 10^8$  & 1.0000239757 
\end{tabular}
\end{center}
\vspace{2mm}
\caption{The second column gives the numbers $P(x)=P_{1,3}(x)$ of projective primes 
$m=1+p+p^2$ for primes $p\le x=i\cdot 10^{10}$, where $i=1,\ldots,10$ (the cumulative 
totals from Table~\ref{tab:betrema}),
the third column gives the corresponding Bateman--Horn estimates $E(x)=E_{1,3}(x)$ for $P(x)$,
and the fourth column gives the ratios $E(x)/P(x)$.}
\label{tab:n=3BHratios}
\end{table}

%%%%%%%%%%%

\subsection{Type $(1,5)$} For projective primes of type $(1,5)$, using $f_1=t$ and $f_2=1+t+\cdots+t^4$ we found that $C=2.571048$. To count such primes 
$m\le 10^{18}$ we took $x=10^{9/2}$. Maple gives
\[\int_2^x\negthinspace\frac{dt}{(\ln t)^2}=383.84,\]
so that
\[E_{(1,5)}(x)=\frac{C}{4}\int_2^x\negthinspace\frac{dt}{(\ln t)^2}=246.72,\]
compared with the true value $P_{(1,5)}(x)=252$.

%%%%%%%%%%%%%%

\subsection{Type $(3,3)$} With $f_1=t$ and $f_2=1+t^3+t^6$,
we found that $C=2.086089$. Taking $x=10^3$, Maple gives
\[E_{3,3}(x)=\frac{C}{6}\int_2^x\negthinspace\frac{dt}{(\ln t)^2}=12.06,\]
compared with the true value $P_{(3,3)}(x)=10$.

%%%%%%%%%%%%%%%%%

\subsection{Other types $(e,n)$} 

For other fixed types $(e,n)$ there are too few projective primes within our range of feasible computation for comparisons to be meaningful. Nevertheless, in all cases $E_{(e,n)}(x)\to\infty$ as $x\to\infty$, so the accuracy of the above estimates encourages us to conjecture that there are infinitely many projective primes of each possible type $(e,n)$.

%%%%%%%%%%%%%%%%

\subsection{Fixed $q$, $n\to\infty$}

Computer searches for fixed $q$ and $n\to\infty$ are even more difficult, and the BHC no longer applies (though similar heuristic estimates are possible), so rather than making a conjecture we simply ask whether any fixed $q$ (necessarily prime, by Lemma~\ref{le:irred}) yields infinitely many projective primes. This generalises the Mersenne primes problem for $q=2$.

%%%%%%%%%%%%%%%

\section{Groups of prime power degree}

Although Section~\ref{sec:applying} of this paper concentrates on those 
cases where the natural degree $m$ of ${\rm PSL}_n(q)$ is prime, 
there is also interest in cases such as ${\rm PSL}_2(8)$ and ${\rm PSL}_5(3)$ where $m$ is 
a prime power ($3^2$ and $11^2$ respectively). For instance, Guralnick~\cite{Gur} has shown that 
if a nonabelian simple group $S$ has a transitive representation of prime power degree, 
then $S$ is an alternating group or ${\rm PSL}_n(q)$ acting naturally, or ${\rm PSL}_2(11)$, 
${\rm M}_{11}$ or ${\rm M}_{23}$ acting as in (c) in Section~\ref{sec:prime}, or the unitary 
group ${\rm U}_4(2)\cong {\rm Sp}_4(3)\cong {\rm O}_5(3)$ permuting the $27$ lines on 
a cubic surface. In particular, $S$ is doubly transitive in all cases except the last, 
where it has rank~3. See also~\cite{EGSS},
where Estes, Guralnick, Schacher and Straus have shown that for each prime $p$
there are only finitely many $e, q, n\ge 3$ such that $p^e=(q^n-1)/(q-1)$.

If $n$ is composite then ${\rm PSL}_n(q)$ cannot have prime degree, but could it have prime power degree?
More generally, while a reducible polynomial $f(t)\in{\mathbb Z}[t]$ can take only finitely many prime values,
can it take infinitely many prime power values? This issue is addressed in~\cite{JZ:block}.

%%%%%%%%%%%%%%%

\section{Linear groups of prime degree}\label{sec:linear}

One can also apply this technique to other situations within Group Theory, such as the classification of linear groups of prime degree, where `degree' in this context means the degree, or dimension $m$, of a faithful irreducible matrix representation over $\mathbb C$. For example, in~\cite{DZ98} Dixon and Zalesskii have classified the finite primitive subgroups $G\le{\rm SL}_m({\mathbb C})$, for prime $m$, that is, those which preserve no non-trivial direct sum decomposition of the natural module ${\mathbb C}^m$. The centre $Z$ of $G$, consisting of scalar matrices, has order $1$ or $m$; if the socle (subgroup generated by the minimal normal subgroups) $M$ of $G/Z$ is abelian then $G/Z$ is an extension of a normal subgroup $M\cong{\rm C}_m\times{\rm C}_m$ by an irreducible subgroup of ${\rm SL}_2(m)$, all of which are known; the authors therefore concentrate on the case where $M$ is non-abelian, dealing in the main paper with the case where $M$ acts primitively, and in a corrigendum with the imprimitive case (see Subsection~\ref{subsec:imprim} for the latter).

If $M$ is primitive then it is a non-abelian simple group $S$ with $G/Z\le{\rm Aut}\,S$. Theorem~1.2 of~\cite{DZ98} gives a finite list of families of simple groups $S$ which can arise, with necessary and sufficient conditions on $m$ and their parameters for such groups $G$ to exist. This result is analogous to our description in Section~\ref{sec:prime} of the permutation groups of prime degree, in the sense that for some families it is unknown whether these conditions are satisfied by finitely or infinitely many sets of parameters. For several of these families one can provide evidence for the latter by using the BHC in the same way as we have applied it to permutation groups ${\rm PSL}_n(q)$ of prime degree. The relevant cases are as follows.

%%%%%%%%%%%

\subsection{Unitary groups} As a simple example, Case~(4) of Theorem~1.2 includes 
groups $G$ for which $S$ is isomorphic to the unitary group 
${\rm PSU}_n(q)$, where the degree
\[m=\frac{q^n+1}{q+1}=1-q+q^2-\cdots+q^{n-1}\]
of the representation is prime, so that $n$ is an odd prime. (Here, as usual, $q$ denotes a prime power.) It is unknown whether there are finitely or infinitely many such pairs $(n,q)$ for which $m$ is prime.

The BHC estimates $E(x)$ for the pair of irreducible polynomials
\begin{equation}\label{eq:PSU}
f_1(t)=t\quad\hbox{and}\quad f_2(t)=1-t^e+t^{2e}-\cdots+t^{(n-1)e}
\end{equation}
are identical to those for $f_1(t)=t$ and $f_2(t)=1+t^e+t^{2e}+\cdots+t^{(n-1)e}$ which we found in Section~\ref{sec:applying}: the values of $\omega_f(r)$ are the same for all primes $r$, since there is a bijection $t\mapsto-t$ between the roots of the two polynomials $f=f_1f_2$ mod~$(r)$ for each $r$, while all other ingredients of (\ref{eq:BH-Q}) and (\ref{eq:BH-C}) are unchanged. It follows that our earlier estimates $E_{(e,n)}(x)$ for permutation groups ${\rm PSL}_n(q)$ of degree $(q^n-1)/(q-1)$ all apply in this new situation. The only difference is in the verification of these estimates, where we determine the actual number $Q(x)=Q_{(e,n)}(x)$ of primes $t\le x$ such that $1-t^e+t^{2e}-\cdots+t^{(n-1)e}$ is prime.

Some results of this kind are shown in Table~\ref{tab:up-to-10^{18}}, where the first column shows the type $(e,n)$, and the second and third columns show the numbers of primes $m\le 10^{18}$ of the forms $(q^n-1)/(q-1)$ and $(q^n+1)/(q+1)$, where $q=p^e$. (The prime $m=31$ is counted twice in the second column, once each for types $(1,3)$ and $(1,5)$.) The exceptional cases are defined to be those not of type $(1,3)$.

Since the BHC estimates $E_{(e,n)}(x)$ for these two families of primes are identical, and are almost identical with Li's amendment, and since they agree very closely and fairly closely with the computer searches in the two main cases of types $(1,3)$ and $(1,5)$, we extend our conjecture of infinitely many primes $(q^n-1)/(q-1)$ for any given prime $n\ge 3$ to those of the form $(q^n+1)/(q+1)$, and hence to the associated linear groups $G$ of these degrees.

\medskip

\vspace{1cm}

\begin{table}[htbp]
\begin{center}
\begin{tabular}{c|c|c}
$(e,n)$ & $(q^n-1)/(q-1)$ & $(q^n+1)/(q+1)$ \\
\hline\hline
$(1,3)$  & $1\,974\,010$   & $1\,973\,762$ \\
\hline\hline
$(1,2)$  & $1$	 & -- \\
$(2,2)$  & $1$	 & -- \\
$(4,2)$  & $1$	 & -- \\
$(8,2)$  & $1$	 & -- \\
$(16,2)$ & $1$	 & -- \\
\hline
$(1,5)$  & $252$ & $232$ \\
$(1,7)$  & $21$  & $24$ \\
$(1,11)$ & $3$	 & $3$ \\
$(1,13)$ & $4$   & $3$ \\
$(1,17)$ & $2$	 & $3$ \\
$(1,19)$ & $1$	 & $2$ \\
$(1,23)$ & --	 & $2$ \\
$(1,31)$ & $1$	 & $1$ \\
$(1,43)$ & --	 & $1$ \\
$(1,61)$ & --	 & $1$ \\
\hline
$(3,3)$	 & $10$	 & $9$ \\
$(5,5)$  & --	 & $1$ \\
$(7,7)$  & $1$	 & $1$ \\
$(9,3)$  & $1$	 & $1$ \\
\hline\hline
{\sc Total}	& $1\,974\,311$ & $1\,974\,046$ \\
exceptional & $301$ & $284$
\end{tabular}
\end{center}
\caption{The second and third columns show the numbers of primes $m\le 10^{18}$ 
of the forms $(q^n-1)/(q-1)$ and $(q^n+1)/(q+1)$, where $q=p^e$. The type $(e,n)$ is indicated
in the first column.}
\label{tab:up-to-10^{18}}
\end{table}

%%%%%%%%%

\subsection{Projective special linear groups} Several other cases in~\cite[Theorem~1.2]{DZ98} can be treated in a similar way by using the BHC. For example, Case~(2)(ii) includes groups $G$ with $S\cong{\rm PSL}_2(q)$ where $q$ and the degree $m=(q-1)/2$ are both prime, that is, $m$ is a Sophie Germain prime, one for which $2m+1$ is also prime. In this case we can take
\begin{equation}\label{eq:PSL2}
f_1(t)=t\;(=m) \quad \mbox{and} \quad f_2(t)=2t+1\;(=q),
\end{equation}
giving
\[C=C(f_1,f_2)=2\negthickspace\negthickspace\prod_{{\rm prime}\,r>2}\left(1-\frac{1}{r}\right)^{-2}\left(1-\frac{2}{r}\right)\]
%=1.320324\]
(the same as for twin primes, where $f_i(t)=t$ and $t+2$, since in both cases $\omega_f(r)=1$ or $2$ as $r=2$ or $r>2$, see~\cite{AFG}).
This time, the constant is known with great accuracy: it is equal to
$2C_2$ where the constant $C_2=0.66016181584686957393$ (see \cite{OEIS}, entry A001692) 
is called the {\em Hardy--Littlewood twin primes constant}.
With a non-monic polynomial $f_2$, it is now more accurate to use Li's improvement 
of the BHC
\begin{eqnarray}\label{eq:Li}
E(x) = C\int_2^x\frac{dt}{\ln(t)\cdot\ln(2t+1)}
\end{eqnarray}
(see Remark~\ref{rem:Li}), as he has shown in~\cite{Li}, where his Table~2 
compares his estimates for $x=10^n$ ($n=2,\ldots, 10$) with those using the original 
BHC formula and with the actual number $Q(x)$. 
We reproduce here his results, removing those of the original BHC,
computing the integrals a little more accurately, and adding the relative errors 
of the estimates, see Table~\ref{tab:Li}.

\begin{table}[htbp]
\begin{tabular}{c|c|c|c}
$x$ & $Q(x)$ & $E(x)$ & relative error \\
\hline
$10^2$    & 10           & 10.20           & 2.00\,\%     \\
$10^3$    & 37           & 39.10           & 5.67\,\%     \\
$10^4$    & 190          & 194.58          & 2.41\,\%     \\
$10^5$    & 1\,171       & 1\,165.95       & $-0.43\,\%$  \\
$10^6$    & 7\,746       & 7\,810.64       & 0.83\,\%     \\
$10^7$    & 56\,032      & 56\,127.94      & 0.17\,\%     \\
$10^8$ 	  & 423\,140     & 423\,294.39     & 0.036\,\%    \\
$10^9$    & 3\,308\,859  & 3\,307\,887.89  & $-0.029\,\%$ \\
$10^{10}$ & 26\,569\,515 & 26\,568\,824.04 & $-0.0026\,\%$ 
\end{tabular}
\smallskip
\caption{$Q(x)$ is the number of $t\le x$ such that both $t$ and $2t+1$ are prime;
$E(x)$ is the estimate of $Q(x)$ given by formula (\ref{eq:Li}).}
\label{tab:Li}
\end{table}

To compare two estimates, we may take the original BHC estimate for $x=10^{10}$, namely,
$$
C \int_2^{10^{10}} \frac{dt}{(\ln t)^2} = 27\,411\,416.53
$$
with the relative error 3.17\,\%. This accuracy is also not bad, but $-0.0026\,\%$ is
significantly better.

The above estimates provide strong support for the conjecture that 
there are infinitely many Sophie Germain primes, and hence that there are infinitely 
many linear groups $G$ in the family under consideration.

Case~(2)(iii) of~\cite[Theorem~1.2]{DZ98} concerns groups $G$ for which $S\cong{\rm PSL}_2(q)$ where the degree $m=(q+1)/2$ is prime and $q=p^{2^k}\ge 5$ for some odd prime $p$ and integer $k\ge 0$. Here we can choose some fixed $k\ge 0$, and take
\begin{equation}\label{eq:PSL2again}
f_1(t)=2t+1\;(=p)\quad\hbox{and}\quad
f_2(t)=\frac{(2t+1)^{2^k}+1}{2}=\sum_{i=1}^{2^k}\binom{2^k}{i}2^{i-1}t^i+1\;(=m).
\end{equation}
For example, if $k=0$ then $f_2(t)=t+1$, so writing $s:=t+1$ we can apply the BHC (+Li) 
to the polynomials $g_1(s)=s\;(=m)$ and $g_2(s)=2s-1\;(=p)$; then $C(g_1,g_2)=1.3203236316\ldots$ again.

\begin{lemm}\label{lem:d=2^k}
Let $f_1$ and $f_2$ be as in\/ {\rm (\ref{eq:PSL2again})}, and denote $d=2^k$, $k\ge 1$. 
Then
\begin{equation}\label{eq:d=2^k}
\omega_f(r) \,=\, \left\lbrace
\begin{array}{ll}
0   & r=2, \\
d+1 & \mbox{\rm if } r\equiv 1\; \mbox{\rm mod } (2d), \\
1	& \mbox{\rm otherwise}.
\end{array}
\right.
\end{equation}
\end{lemm}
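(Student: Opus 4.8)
The plan is to compute $\omega_f(r)$, the number of zeros of $f=f_1f_2$ in $\mathbb{F}_r$, by handling the prime $r=2$ by hand and then, for odd $r$, reducing to the classical count of solutions of $u^d=-1$ in a cyclic group. First I would dispose of $r=2$: reducing the explicit expansion $f_2(t)=\sum_{i=1}^{d}\binom{d}{i}2^{i-1}t^i+1$ modulo $2$, every term with $i\ge 2$ vanishes because of the factor $2^{i-1}$, and the $i=1$ term equals $\binom{d}{1}t=dt=2^{k}t\equiv 0\pmod 2$ since $k\ge 1$; hence $f_2\equiv 1\pmod 2$, and likewise $f_1=2t+1\equiv 1\pmod 2$, so $f$ has no root mod $2$ and $\omega_f(2)=0$.

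For odd $r$ the key observation is that $2$ is invertible in $\mathbb{F}_r$, so $u=2t+1$ defines a bijection of $\mathbb{F}_r$, and
\[
f(t)=f_1(t)f_2(t)=(2t+1)\cdot\frac{(2t+1)^{d}+1}{2}=\frac{u\bigl(u^{d}+1\bigr)}{2}.
\]
Thus $f(t)=0$ in $\mathbb{F}_r$ if and only if $u=0$ or $u^{d}=-1$; these two cases are disjoint because $0^{d}\ne-1$, and the first accounts for exactly one value of $t$ (namely $t=-1/2$). Hence $\omega_f(r)=1+N$, where $N=\#\{u\in\mathbb{F}_r:u^{d}=-1\}$, and in particular the root of $f_1$ automatically never meets a root of $f_2$.

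Next I would count $N$ using that $\mathbb{F}_r^{\times}$ is cyclic of order $r-1$. With $g=\gcd(d,r-1)$, the equation $u^{d}=-1$ has $0$ or exactly $g$ solutions, and it has $g$ solutions precisely when $-1$ lies in the image of the $d$-th power map, i.e.\ when $(-1)^{(r-1)/g}=1$, i.e.\ when $(r-1)/g$ is even. Since $d=2^{k}$, writing $v=v_2(r-1)\ge 1$ gives $g=2^{\min(k,v)}$; then $(r-1)/g$ is even exactly when $v>k$, that is when $2^{k+1}=2d$ divides $r-1$, in which case $g=2^{k}=d$. Therefore $N=d$ when $r\equiv 1\pmod{2d}$ and $N=0$ otherwise, so $\omega_f(r)=d+1$ or $1$ accordingly, which together with $\omega_f(2)=0$ is exactly~\eqref{eq:d=2^k}.

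The only delicate point, and the one I would flag as the main obstacle, is the $2$-adic bookkeeping in the last step: it is tempting to conclude $N=d$ as soon as $d\mid r-1$, but when $v_2(r-1)=k$ exactly one has $d\mid r-1$ while $-1$ is \emph{not} a $d$-th power (because $(r-1)/d$ is odd), so $N=0$; the correct threshold is $2d\mid r-1$, and this is precisely what produces the congruence condition $r\equiv 1\pmod{2d}$ in the statement.
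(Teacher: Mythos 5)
Your proof is correct and follows essentially the same route as the paper's: the single root of $f_1$ accounts for one solution when $r$ is odd, and the roots of $f_2$ are counted via the equation $u^{2^k}=-1$ in the cyclic group of units mod $r$, giving $d$ solutions exactly when $2^{k+1}\mid r-1$, i.e.\ $r\equiv 1 \bmod (2d)$. The only differences are cosmetic: the paper counts those solutions as the $\varphi(2^{k+1})=2^k$ elements of exact order $2^{k+1}$, whereas you use the $d$-th power residue criterion with $\gcd(d,r-1)$ and a $2$-adic valuation argument, and you are somewhat more explicit than the paper about the case $r=2$ and the disjointness of the roots of $f_1$ and $f_2$.
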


\begin{proof}
The equation $f_1=2t+1=0$ has a single root for any $r\ne 2$. What remains is to show that
the equation $f_2(t)=0$ has $d$ roots if $r\equiv 1 \mbox{ mod } (2d)$, and no roots
otherwise.

For each prime $r$, the multiplicative group $U_r$ of units mod $(r)$ is cyclic, of 
order $r-1$. Hence, for any $n$, the number of solutions of $x^n=1$ in $U_r$ is 
$\gcd(n,r-1)$, and the number of elements of order exactly $n$ is $\varphi(n)$ if 
$n$ divides $r-1$ and $0$ otherwise. For $r>2$ the solutions of $x^{2^k}=-1\mbox{ mod }(r)$ 
are the elements of order exactly $2\cdot 2^k=2^{k+1}$, so the number of them is 
$\varphi(2^{k+1})=2^k$ or~$0$ as $2^{k+1}$ divides $r-1$ or not, that is, as 
$r\equiv 1 \mbox{ mod }(2^{k+1})$ or not.
\end{proof}

Lemma \ref{lem:d=2^k} allows us to compute the constants $C(f_1,f_2)$, which we will
denote here by $C(k)$ according to the exponent $k$ in $f_2$, so that $\deg f_2=d=2^k$. 
All the constants in Tables \ref{tab:d=2^k} and \ref{tab:more-const} are computed over 
$r$ up to~$10^9$.

\begin{table}[htbp]
\begin{center}
\begin{tabular}{c|c|c|c|c}
$d=2^k$  & $C(k)$      & $Q(10^9)$   & $E(10^9)$      & relative error  \\
\hline
2        & $4.426783$  & 5\,448\,994 & 5\,448\,648.05 & $-0.006$\,\%    \\
4		 & $10.433814$ & 6\,373\,197 & 6\,365\,668.39 & $-0.118$\,\%    \\
8		 & $7.885346$  & 2\,394\,012 & 2\,395\,075.38 & $0.044$\,\%		\\
16		 & $14.642571$ & 2\,219\,445 & 2\,218\,975.66 & $-0.021$\,\%
\end{tabular}
\end{center}
\smallskip
\caption{$Q(10^9)$ is the number of $t\le 10^9$ such that both $f_1(t)$ and $f_2(t)$
are prime; $E(10^9)$ is the BHC-estimate of $Q(10^9)$.}
\label{tab:d=2^k}
\end{table}

It is too time-consuming to compute further the values of $Q(x)$ since the numbers
$f_2(t)$ become too large, but the computation of the constants $C(k)$ does not present
any additional difficulties. Therefore, we give, in Table~\ref{tab:more-const}, a few 
additional values of this constant.

\begin{table}[htbp]
\begin{center}
\begin{tabular}{c||c|c|c|c|c|c}
$d=2^k$ & 32 		& 64 		& 128 		& 256 		& 512 		& 1024 		\\
\hline
$C(k)$  & 14.424708 & 15.766564 & 12.357306 & 29.736770 & 29.939460 & 32.071863 \\
\hline
$d=2^k$ & 2\,048	& 4\,096	& 8\,192	& 16\,384	& 32\,768	& 65\,536 	\\
\hline
$C(k)$  & 28.880619 & 33.684327 & 33.856467 & 32.037016 & 23.187603 & 44.755201
\end{tabular}
\end{center}
\smallskip
\caption{Constants $C(k)$ for $k=5,\ldots,16$.}
\label{tab:more-const}
\end{table}

%%%%%%%%%

\subsection{Irregular behaviour of the constants $C(k)$}

It is interesting that in this example, as $k$ increases, the Hardy--Littlewood constant $C=C(k)$ also increases, but does not do so monotonically. The following is a heuristic explanation of this curious phenomenon.

For each $k\ge 1$ we have $\omega_f(2)=0$, so~(\ref{eq:BH-C}) gives $C(k)=4\prod_{r>2}c_r$ where
\[c_r=\left(1-\frac{1}{r}\right)^{-2}\left(1-\frac{\omega_f(r)}{r}\right)\]
for each prime $r>2$. Hence
\begin{equation}\label{eq:lnC}
\ln C(k)=\ln 4+\sum_{r>2}\ln c_r,
\end{equation}
where
\[\ln c_r=-2\ln\left(1-\frac{1}{r}\right)+\ln\left(1-\frac{\omega_f(r)}{r}\right)
\approx \frac{2-\omega_f(r)}{r}\]
for each prime $r>2$. Now $\omega_f(r)$ is the number of roots of the polynomial $x(x^{2^k}+1)$ mod~$(r)$, that is, $1+2^k$ or $1$ as $r\equiv 1$ mod~$(2^{k+1})$ or not, so that
\[\ln c_r\approx\frac{1-2^k}{r}\quad\hbox{or}\quad \frac{1}{r}\]
respectively.

Let us define $r_k$ to be the least prime $r\equiv 1$ mod~$(2^{k+1})$, and let us partition the set of primes $r>2$ into three sets: the set $U_k$ of those $r<r_k$, the set $V_k$ of those $r\equiv 1$ mod~$(2^{k+1})$, and the set $W_k$ of those satisfying $r_k\le r\not\equiv 1$ mod~$(2^{k+1})$. Now odd primes $r$ are evenly distributed between the $2^k$ congruence classes of units mod~$(2^{k+1})$, so as $r$ increases, those in $W_k$ appear $2^k-1$ times as frequently as those in $V_k$. It follows that the positive and negative contributions to (\ref{eq:lnC}) of primes in these two sets approximately cancel, leaving just the contributions from primes in $U_k$. Thus
\[\ln C(k)\approx\ln 4+\sum_{r\in U_k}\ln c_r\approx\ln 4+\sum_{2<r<r_k}\frac{1}{r}\approx\ln 4+\ln(\ln r_k)-\frac{1}{2}+b,\]
(see~\cite[Theorem~427]{HW} or~\cite[Theorem 8.8(d)]{NZM}) where
\[b:=\lim_{x\to\infty}\left(\sum_{r<x}\frac{1}{r}-\ln(\ln x)\right)=0.2614972128\ldots\]
is the Meissel--Mertens constant, and hence
\begin{equation}\label{eq:Capprox}
C(k)\approx 4e^{b-1/2}\ln r_k=4e^{b-1/2}\ln(2^{k+1}q_k+1)\approx 4e^{b-1/2}((k+1)\ln 2+\ln q_k)
\end{equation}
where $q_k:=(r_k-1)/2^{k+1}$ for each $k\ge 1$.

Now the sequence of primes
{\small 
$$
r_k = 5, 17, 17, 97, 193, 257, 257, 7681, 12289, 12289, 12289, 40961, 65537, 65537, 65537, 
786433, \ldots
$$
}

\vspace{-5mm}\noindent
gives the sequence of integers $q_k$ shown, for $k=1,\ldots, 16$ in Table~\ref{tab:C(k)q_k},
with the values of $C(k)$, rounded to the nearest integer, shown for comparison.
The irregular behaviour of the terms $\ln q_k$ disturbs the steady increase of the terms $(k+1)\ln 2$ in (\ref{eq:Capprox}).
In particular, if $q_k$ is even then $q_{k+1}=q_k/2$ and hence $\ln q_{k+1}=\ln q_k-\ln 2$, explaining the occasional `plateaux' in the sequence of constants $C(k)$.

\begin{table}[htbp]
\begin{center}
\begin{tabular}{c|c|c|c|c|c|c|c|c|c|c|c|c|c|c|c|c}
$k$ & 1 & 2 & 3 & 4 & 5 & 6 & 7 & 8 & 9 & 10 & 11 & 12 & 13 & 14 & 15 & 16 \\
\hline
$C(k)\approx$  & 4 & 10 & 8 & 15 & 14 & 16 & 12 & 30 & 30 & 32 & 29 & 34 & 34 & 32 & 23 & 45 \\
\hline
$q_k$ & 1 & 2 & 1 & 3 & 3 & 2 & 1 & 15 & 12 & 6 & 3 & 5 & 4 & 2 & 1 & 6	 \\
\end{tabular}
\end{center}
\smallskip
\caption{$C(k)$ (to the nearest integer) and $q_k$ for $k=1,\ldots,16$.}
\label{tab:C(k)q_k}
\end{table}

Both sequences $r_k$ and $b_k$ may be found in the entries A035089 and A035050
of \cite{OEIS}, respectively; to get our sequences, the first two terms of these entries should 
be removed.

The above argument clearly lacks rigour: for example, we have not quantified the errors introduced by the linear approximation of logarithms, or the extent to which the contributions from primes in $V_k$ and $W_k$ `approximately cancel'. Moreover, instead of defining $U_k$ by the inequality $r<r_k$ we could have reduced this upper bound, and argued as before, resulting in a smaller multiplicative constant in (\ref{eq:Capprox}). However, our aim here is explanation rather than precise proof, revealing a cause for the irregular behaviour of the constants $C(k)$ as $k$ increases, rather than trying to estimate them accurately. We leave that to the experts in this area of number theory.

%%%%%%%%%%%

\subsection{Symplectic groups} Case~(3)(i) of~\cite[Theorem~1.2]{DZ98} concerns groups $G$ for which $S\cong{\rm PSp}_{2n}(q)$ where the degree $m=(q^n+1)/2$ is prime, $n\;(>1)$ is a power of $2$, and $q=p^{2^k}$ for some odd prime $p$ and integer $k\ge 0$. Here we can choose some fixed pair $j, k\ge 0$, put $n=2^j$, and take
\begin{equation}\label{eq:PSp}
f_1(t)=2t+1\;(=p)\quad\hbox{and}\quad f_2(t)=\frac{(2t+1)^{2^{j+k}}+1}{2}\;(=m).
\end{equation}
These are the same as the pair $f_1, f_2$ in (\ref{eq:PSL2again}), but with $j+k$ replacing $k$, so the same estimates $E(x)$ and search results $Q(x)$ apply in this case.

%%%%%%%%%%%

\subsection{The remaining families} The other families of groups in~\cite[Theorem~1.2]{DZ98} are either
\begin{itemize}
\item[(a)] obviously infinite, namely
{\begin{itemize}
\item[(a1)] Case~1, with $S\cong{\rm A}_{m+1}$ for primes $m\ge 7$, or
\item[(a2)] Case~2(i), with $S\cong{\rm PSL}_2(m)$ for primes $m\ge 11$, or
\end{itemize}}
\item[(b)] obviously finite, namely Case~5, with
{\begin{itemize}
\item[(b1)] $m=3$ and $S\cong{\rm PSL}_2(9)\cong{\rm A}_6$,
\item[(b2)] $m=7$ with $S\cong{\rm PSp}_6(2)$,
\item[(b3)] $m=11$ with $S\cong{\rm M}_{12}$, or
\item[(b4)] $m=23$ with $S\cong{\rm Co}_2$, ${\rm Co}_3$ or ${\rm M}_{23}$, or
\end{itemize}}
\item[(c)] beyond the scope of the BHC, involving exponential functions rather than polynomials, namely
\begin{itemize}
\item[(c1)] Case~2(iv), with primes $m=2^n-1$ and $S\cong{\rm PSL}_2(2^n)$,
\item[(c1)] Case~2(ii), with primes $m=(3^n-1)/2$ and $S\cong{\rm PSL}_2(3^n)$, or
\item[(c2)] Case~3(ii), with primes $m=(3^n-1)/2$ and $S\cong {\rm PSp}_{2n}(3)$, 
\end{itemize}
where $n$ is an odd prime in all three cases.
\end{itemize}

The primes $m$ appearing in (c1) are the Mersenne primes, while those appearing in (c2) and (c3) appear to be equally difficult to deal with. There is heuristic evidence to support conjectures that both sets are infinite, but proofs seem to be very far away.

%%%%%%%%%%%%%

\subsection{Imprimitive groups}\label{subsec:imprim} An irreducible linear group $G$ of prime degree $m$ is imprimitive if and only if it acts transitively on the $m$ $1$-dimensional subspaces in a direct sum decomposition of its natural module. This action gives an epimorphism from $G$ to a transitive permutation group $H$ of degree $m$; its kernel $D$, conjugate to a group of diagonal matrices, is abelian. We have listed the possibilities for $H$ in Section~\ref{sec:prime}. Now $G$ is solvable if and only if $H$ is, in which case the latter acts as a subgroup of ${\rm AGL}_1(m)$. This case having been dealt with by other authors, Dixon and Zalesskii considered the nonsolvable imprimitive linear groups of degree $m$ in~\cite{DZ04}, this time over an arbitrary algebraically closed field; of course, our results concerning the groups $H={\rm PSL}_n(q)$ have some relevance here. The technical problems to be overcome in classifying the groups $G$ are considerable: given $H$, one has to consider which diagonal groups $D$ it can act on, whether or not the corresponding extensions split, and whether or not the resulting groups $G$ are conjugate in the general linear group. The results obtained in~\cite{DZ04} are too complicated to state here.

In the corrigendum of~\cite{DZ98} it is shown that if $G$ is primitive and the socle $M$ of $G/Z$ is imprimitive and non-abelian, then the commutator subgroup $G'$ is imprimitive and isomorphic to ${\rm PSL}_n(q)$ where $m=(q^n-1)/(q-1)$, with $q$ odd or $G'\cong{\rm PSL}_3(2)$ if $n\ge 3$.  Conversely, such groups $G$ exist provided $m\ge 5$. Again, our results on ${\rm PSL}_n(q)$ are relevant in this case.

%%%%%%%%%%%%%%%%%%

\section{Related problems}

\subsection{Waring's Problem}
Projective primes have occasionally been examined by number theorists, but in a completely different context, that of Waring's Problem (see~\cite[Ch.~XX]{HW}, for example). This asks whether, for each integer $m\ge 1$, there is an integer $g(m)$ such that each positive integer is a sum of at most $g(m)$ $m$-th powers. For instance $g(1)=1$, and $g(2)=4$ by Lagrange's Four Squares Theorem. After Hilbert~\cite{Hil} proved the existence of $g(m)$ in 1909, Tornheim~\cite{Tor} and Bateman and Stemmler~\cite{BaSt} considered similar problems in other number systems. In each case, they took $m$ to be prime for simplicity, and encountered extra difficulties when $m$ was what we have called a projective prime. The reason is that if $m=(q^n-1)/(q-1)$ then every $m$-th power in the field ${\mathbb F}_{q^n}$ lies in the subfield ${\mathbb F}_q$, so elements outside ${\mathbb F}_q$ cannot be sums of $m$-th powers. 

This problem was also important in another way: an investigation in~\cite{BaSt} of the 
frequency of the occurrence of the above phenomenon led Bateman and Horn~\cite{BH} to 
their conjecture.

\subsection{Error-correcting codes} The results on projective primes in Section~\ref{sec:applying} are also relevant to the classification of non-elementary cyclic linear codes of prime length by Guenda and Gulliver in~\cite{GG}, where class~(v) in their Theorem~3 consists of codes with associated permutation group ${\rm P\Gamma L}_n(q)$ acting with prime degree $(q^n-1)/(q-1)$. 

\subsection{Block designs}
A construction by Amarra, Devillers and Praeger \cite{ADP} of block-transitive
point-imprimitive $2$-designs with specific parameters depends on certain polynomials, such as $f(t)=32t^2+20t+1$, taking prime power values. Using all primes $r<10^8$ gives 
$C(f)=4.721240$, and Li's  modified BHC then gives an estimate $E(10^8)=12\,362\,961.06$. In fact, there are $12\,357\,532$ values of $t\le 10^8$ such that $f(t)$ is prime. The relative error is $0.044\%$. See~\cite{JZ:block} for other polynomials and further details.

\subsection{Difference sets}
A construction of divisible difference sets by Fern\'andez-Alcober, Kwashira and Mart\'{\i}nez in~\cite[Proposition~4.1]{FKM} depends on the existence of prime powers $q$ such that $3q-2$ is also a prime power. In such generality, this situation is beyond the scope of the BHC, but one can deal with the case where $q$ and $3q-2$ are both prime by applying it to the polynomials $f_1=t$ and $f_2=3t-2$. We find that $C=2.640647$, leading to an estimate $E(10^9)=6\,485\,752.27$ for the number of such primes $q\le 10^9$, compared with the actual number $Q(10^9)=6\,484\,218$ found by computer search. The relative error is $0.024\%$.

More generally, one can deal with the case where $q$ is a prime power and $3q-2$ is prime by taking $f_1=t$ and $f_2=3t^e-2$ for some fixed $e$. For example, if we take $e=2$ then $C=2.540480$, giving an estimate $E(10^9)=3\,205\,208.84$ for the number of such primes $t=\sqrt{q}\le 10^9$, compared with the actual number $Q(10^9)=3\,203\,900$. In this case the relative error is $0.041\%$.

One can also deal with the case where $q$ is prime and $3q-2$ is a proper prime power $p^e$ by using the polynomials $t\;(=p)$ and $(t^e+2)/3\;(=q)$ for fixed $e\ge 2$; we need $t^e\equiv 1$ mod~$(3)$ in order to have integer coefficients, so write $t=3s+1$, giving polynomials $3s+1$ and $((3s+1)^e+2)/3$ in ${\mathbb Z}[s]$, or $t=3s-1$ with $e$ even, giving polynomials $3s-1$ and $((3s-1)^e+2)/3$ in ${\mathbb Z}[s]$. For instance if $e=2$ we can apply the BHC to two pairs of polynomials, namely $3s+1, 3s^2+2s+1$, and $3s-1, 3s^2-2s+1$;  in the first case we get an estimate of $4\,892\,910.99$ and an actual number $4\,893\,804$ (error $-0.018\%$), with corresponding values $4\,892\,911.60$ and $4\,894\,315$ (error $-0.029\%$) in the second case.

These results strongly suggest that there are infinitely many pairs of prime powers $q$ and $3q-2$ with at least one of them prime. This might suggest a similar conjecture in the remaining case, where both $q$ and $3q-2$ are proper prime powers, but here we have Mihailescu's proof of the Catalan Conjecture as a warning. Similarly, there is Pillai's conjecture that for fixed integers $A, B, C>0$ there are only finitely many integer solutions of the equation $Ax^m-By^n=C$ with $m, n>2$. Thus the status of this part of the construction seems to be an interesting open problem.

\medskip

We close with two conjectures which, while much less important than those considered in Section~\ref{sec:conj}, nevertheless have their own interest.

\subsection{The Goormaghtigh Conjecture (1917)}

Since
\[1+2+2^2+2^3+2^4=31=1+5+5^2,\]
both ${\rm PSL}_5(2)$ and ${\rm PSL}_3(5)$ have natural degree $31$.
Goormaghtigh, a Belgian engineer and amateur mathematician, conjectured in~\cite{Goo} that this example and
\[1+2+2^2+\cdots+2^{12}=8191=1+90+90^2\]
are the only positive integer solutions of
$(x^n-1)/(x-1)=(y^k-1)/(y-1)$
with $n\ne k$ and $n, k\ge 3$. This conjecture is still open.
Although $8191$ is prime, $90$ is not a prime power,
so only the first example is relevant to permutation groups ${\rm PSL}_n(q)$.

%%%%%%%%%%%

\subsection{The Feit--Thompson Conjecture (1962)}

Feit and Thompson~\cite{FT62} conjectured that if $p$ and $q$ are distinct primes then
$(p^q-1)/(p-1)$ does not divide $(q^p-1)/(q-1)$.
They stated that if true this would significantly shorten their 255-page proof~\cite{FT63} that groups of odd order are solvable.
However, an alternative simplification was found by Peterfalvi~\cite{Pet} in 1984.
The conjecture has been proved by Le~\cite{Le12} for $q=3$, but it is otherwise still open.

%%%%%%%%%%%%%%%%%%%

\section{Acknowledgements}

We are greatly indebted to Yuri Bilu who acquainted us with the Bunyakovsky conjecture, 
which became a crucial point of our study, and to Peter Cameron, Robert Guralnick, Weixiong Li, Valery 
Liskovets, Cheryl Praeger and Dimitri Zvonkine for some very useful comments.
Jean B\'etr\'ema helped us with some computations which were too onerous for our Maple
package on a laptop computer. 
Alexander Zvonkin is  partially supported by the ANR project {\sc Combin\'e} (ANR-19-CE48-0011).

%%%%%%%%%%%%%%%%%%%

\end{document}